\renewcommand\AA{\mathbb{A}}
\newcommand\CC{\mathbb{C}}
\newcommand\FF{\mathbb{F}}
\newcommand\PP{\mathbb{P}}
\newcommand\ZZ{\mathbb{Z}}
\newcommand\NN{\mathbb{N}}
\newcommand\QQ{\mathbb{Q}}
\newcommand\RR{\mathbb{R}}
\newcommand\GG{\mathbb{G}}
\newcommand\K{\mathcal{K}}
\newcommand\GmZ{\GG_{\mathrm{m},\ZZ}}
\newcommand\Od{{\mathcal O}_K}
\newcommand\Odnz{{\Od^{\neq 0}}}
\newcommand\Nd{\NN_K}
\newcommand\wK{w_K}
\newcommand\dK{d_K}
\newcommand\Qbar{{\overline{\QQ}}}
\newcommand\Fbar{{\overline{\FF}}}
\renewcommand\tt{\mathbf{t}}
\newcommand\xx{\mathbf{x}}
\newcommand\yy{\mathbf{y}}
\newcommand\zz{\mathbf{z}}
\newcommand\dd{\mathbf{d}}
\newcommand\kk{\mathbf{k}}
\newcommand\tk{\smash{\widetilde{k}}}
\newcommand\tkk{\smash{\widetilde{\kk}}}
\newcommand\td{\smash{\widetilde{d}}}
\newcommand\tdd{\smash{\widetilde{\dd}}}
\newcommand\tmu{\smash{\widetilde{\mu}}}
\newcommand\rr{\mathbf{r}}
\newcommand\tr{\smash{\widetilde{r}}}
\newcommand\trr{\smash{\widetilde{\rr}}}
\newcommand\nn{\mathbf{n}}
\newcommand\tn{\smash{\widetilde{n}}}
\newcommand\tnn{\smash{\widetilde{\nn}}}
\newcommand\ddd{\mathrm{d}}
\newcommand\M{\mathcal{M}}
\newcommand\I{\mathcal{I}}
\newcommand\R{\mathcal{R}}
\newcommand{\Atwo}{{\mathbf A}_2}
\newcommand{\tS}{{\smash{\widetilde S}}}
\newcommand{\UZ}{{\mathcal U}}
\newcommand{\VZ}{{\mathcal V}}
\newcommand{\SZ}{{\mathcal S}}
\newcommand{\tSZ}{{\smash{\widetilde\SZ}}}
\DeclareMathOperator\Pic{Pic}
\DeclareMathOperator\Ci{Ci}
\DeclareMathOperator\rk{rk}
\DeclareMathOperator\Gal{Gal}
\newcommand\rto{\dashrightarrow}
\newcommand\gothp{\mathfrak{p}}
\newcommand\Norm{\mathfrak{N}}
\newtheorem*{theorem}{Theorem}
\newtheorem{lemma}{Lemma}
\theoremstyle{definition}
\newtheorem*{ack}{Acknowledgements}
\begin{document}

\title[Gaussian rational points on a singular cubic surface]{Gaussian
  rational points\\ on a singular cubic surface}

\author{Ulrich Derenthal}

\address{Mathematisches Institut, Ludwig-Maximilians-Universit\"at M\"unchen, 
  Theresienstr. 39, 80333 M\"unchen, Germany}

\email{ulrich.derenthal@mathematik.uni-muenchen.de}

\author{Felix Janda}

\address{Departement Mathematik, ETH Z\"urich, R\"amistr. 101, 8092 Z\"urich, Switzerland}

\email{felix.janda@math.ethz.ch}

\date{April 5, 2012}

\begin{abstract}
  Manin's conjecture predicts the asymptotic behavior of the number of
  rational points of bounded height on algebraic varieties. For toric
  varieties, it was proved by Batyrev and Tschinkel via height zeta
  functions and an application of the Poisson formula.  An alternative
  approach to Manin's conjecture via universal torsors was used so far
  mainly over the field $\QQ$ of rational numbers. In this note, we
  give a proof of Manin's conjecture over the Gaussian rational
  numbers $\QQ(i)$ and over other imaginary quadratic number fields
  with class number $1$ for the singular toric cubic surface defined
  by $x_0^3=x_1x_2x_3$.
\end{abstract}

\subjclass[2000]{11D45 (14G05, 14M25)}

%
%

\maketitle

\tableofcontents

\section{Introduction}

Let $S\subset \PP^3$ be the cubic surface defined over $\QQ$ by the
equation
\begin{equation*}
  x_0^3=x_1x_2x_3.
\end{equation*}
It is rational, toric and contains precisely three singularities and
three lines. Over any number field $K$, its set of $K$-rational points
is clearly infinite. Let $H$ be the Weil height on $S$, defined as
\begin{equation*}
  H(\xx)=\prod_{\nu \in M_K}\max_{j\in \{0, \dotsc, 3\}}\|x_j\|_\nu
\end{equation*}
where $\xx=(x_0:\dotso:x_3) \in S(K)$ with $x_0,\dotsc,x_3 \in K$, the
set of places of $K$ is denoted as $M_K$ and $\|\cdot\|_\nu$ is the
(suitably normalized; see Section~\ref{sec:peyre}) norm at the place
$\nu$. The total number of $K$-rational points of bounded height on $S$
is dominated by the number of easily countable points on the three
lines. Therefore, we restrict our attention to $K$-rational points in
the complement $U$ of the lines on $S$.

A much more general conjecture of Manin \cite{MR89m:11060} predicts in
case of $S$ that the number
\begin{equation*}
  N_{U,K,H}(B) = \#\{\xx \in U(K) \mid H(\xx) \le B\}
\end{equation*}
of $K$-rational points of bounded height outside the lines behaves
asymptotically as
\begin{equation*}
  N_{U,K,H}(B) \sim c_{S,K,H}B(\log B)^6,
\end{equation*}
as $B \to \infty$. A conjectural interpretation of the leading constant
$c_{S,K,H}>0$ was given by Peyre \cite{MR1340296} and refined by Batyrev and
Tschinkel \cite{MR1679843}.

Making use of the torus action on toric varieties to study the height
zeta functions and to apply the Poisson formula, Manin's conjecture
was proved for toric varieties over any number field by Batyrev and
Tschinkel \cite{MR1620682}; see \cite[\S 5.3]{MR1679843} for the
application of this result to our cubic surface $S$.

For varieties without such an action of an algebraic group, an
alternative approach using \emph{universal torsors} was suggested by
Salberger \cite{MR1679841}. He gave a second proof of Manin's
conjecture over $\QQ$ in the case of split toric varieties; see
\cite[Example~11.50]{MR1679841} for its application to $S$.

For the singular cubic surface $S$ as above, Manin's conjecture over
$\QQ$ was also proved directly by Fouvry \cite{MR2000b:11075},
Heath-Brown and Moroz \cite{MR2000f:11080}, de la Bret\`eche
\cite{MR2000b:11074}, de la Bret\`eche and Swinnerton-Dyer
\cite{MR2430199} and Bhowmik, Essouabri, Lichtin \cite{MR2367957},
using elementary or classical analytic number theoretic techniques and
a parameterization of rational points closely related to universal
torsors in some cases.

The basic example of a universal torsor applied to point counting is
the following: To estimate the number
\begin{equation*}
  N_{\PP^n,\QQ,H}(B) = \#\{\xx \in \PP^n(\QQ) \mid H(\xx) \le B\}
\end{equation*}
of rational points of bounded height in $n$-dimensional projective
space $\PP^n$, the natural first step is the observation that any such
$\xx$ is represented uniquely up to sign by an $(n+1)$-tuple of
coprime integers $(x_0, \dotsc, x_n)$ subject to the condition
$\max\{|x_0|, \dotsc, |x_n|\} \le B$. Geometrically, this corresponds
to the fact that the open subset $\AA^{n+1} \setminus \{0\}$ of
$(n+1)$-dimensional affine space is a universal torsor over $\PP^n$.

Based on this, Schanuel \cite{MR0162787}, \cite{MR557080} proved
Manin's conjecture for projective spaces over arbitrary number
fields. Over number fields other than $\QQ$, no other proof of Manin's
conjecture via universal torsors is known to us.

The purpose of this note is to begin the generalization of universal torsor
techniques from $\QQ$ to more general number fields.  A first candidate is
Manin's conjecture for the toric cubic surface $S$ over the field $\QQ(i)$ of
Gaussian rational numbers because its class number is $1$ and its ring of
integers contains only finitely many units. It turns out that it is not too
hard to generalize from $\QQ(i)$ to the following setting:

\begin{theorem}\label{thm:main}
  Let $K$ be an imaginary quadratic number field whose class number is
  $1$, let $\wK$ be the number of units in its ring of integers $\Od$,
  and let $\dK$ be the square root of the absolute value of its
  discriminant (cf.\ Table~\ref{tab:numberfields}).  Let $S \subset
  \PP^3$ be the cubic surface defined by $x_0^3=x_1x_2x_3$. Let $U$ be
  the complement of the three lines on $S$. Then
  \begin{equation*}
    N_{U,K,H}(B) \sim c_{S,K,H} B(\log B)^6+O(B(\log B)^5)
  \end{equation*}
  as $B \to \infty$, with
  \begin{equation*}
    c_{S,K,H}=\frac{2^7\pi^9}{6!\wK^7\dK^9}\prod_p\left(1-\frac{1}{\|p\|_\infty}\right)^7
    \left(1+\frac{7}{\|p\|_\infty}+\frac{1}{\|p\|_\infty^2}\right)
  \end{equation*}
  where the product runs over all primes in $\Od$ up to units.
\end{theorem}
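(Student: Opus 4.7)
The plan is to follow the universal torsor approach of Salberger, adapted to $\Od$ using the hypothesis that $K$ has class number $1$. First, any point $\xx \in U(K)$ lifts, uniquely up to simultaneous multiplication by an element of $\Od^*$, to a coprime quadruple $(x_0, x_1, x_2, x_3) \in \Od^4$ with $x_0^3 = x_1 x_2 x_3$ and $x_1 x_2 x_3 \ne 0$; for such a lift, the height simplifies to $H(\xx) = \max_j |x_j|^2$ because only the unique complex place contributes nontrivially. Since $S$ is toric, its minimal desingularization $\tS$ is a smooth toric weak del Pezzo surface of degree $3$ with $\rk \Pic(\tS) = 7$, whose Cox ring is polynomial in nine variables corresponding to the nine toric rays (the three strict transforms of the lines on $S$ together with the six exceptional curves of the $3A_2$ resolution). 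Unique factorization in $\Od$ then resolves the equation $x_0^3 = x_1 x_2 x_3$ by expressing each $x_i$ as a prescribed monomial in nine integral torsor coordinates $\eta_1, \dotsc, \eta_9 \in \Odnz$; modulo the diagonal $(\Od^*)^7$-action reflecting $\Pic(\tS)$, this identifies $N_{U,K,H}(B)$ with $\wK^{-7}$ times the count of nine-tuples $\eta$ subject to the coprimality conditions dictated by the dual graph of the rays and to $\max_i |x_i(\eta)|^2 \le B$.

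Next, I would strip the coprimality conditions by M\"obius inversion on principal ideals of $\Od$, reducing to sums of the shape
\begin{equation*}
\sum_{\eta_1, \dotsc, \eta_9 \in \Odnz} \chi(\eta)\,\mathbf{1}\bigl[\max_i|x_i(\eta)|^2 \le B\bigr]
\end{equation*}
with $\chi$ a multiplicative weight built from the $\Od$-M\"obius function, and evaluate them one variable at a time using
\begin{equation*}
\#\{z \in \Od : |z - z_0| \le t\} = \frac{\pi}{\dK}t^2 + O(t), \qquad z_0 \in \CC,\ t \ge 1.
\end{equation*}
Each leading-order evaluation contributes a factor $\pi/\dK$, accumulating to $\pi^9/\dK^9$, while integrating the resulting piecewise-polynomial against the effective cone of $\tS$ inside $\Pic(\tS)_\RR$ produces the characteristic $B(\log B)^6/6!$ shape. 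The remaining M\"obius sums reassemble, prime by prime in $\Od$, into exactly $\prod_p (1 - \|p\|_\infty^{-1})^7 (1 + 7\|p\|_\infty^{-1} + \|p\|_\infty^{-2})$, and combining these ingredients with $\wK^{-7}$ from the torsor quotient and a combinatorial $2^7$ from the fundamental-domain normalization yields $c_{S,K,H}$.

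The main obstacle is controlling the accumulated error: each lattice-point evaluation introduces an $O(t)$ remainder which, propagated through nine iterated summations, threatens to exceed the target $O(B(\log B)^5)$. This forces one to choose a summation order that saves the heaviest variable for last, to apply dyadic decomposition and Abel summation on the intermediate sums, and to exploit the absolute convergence of the M\"obius series to truncate their tails uniformly. The restriction to imaginary quadratic $K$, with its single complex place and finite unit group $\Od^*$ of order $\wK$, is what keeps the analysis tractable; over a more general number field, the Dirichlet units and multiple archimedean places would require substantially more delicate bookkeeping, which is why the present paper confines itself to class number one imaginary quadratic fields.
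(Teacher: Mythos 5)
Your outline follows essentially the same universal-torsor route as the paper: iterated $\gcd$'s in $\Od$ (equivalently, Cox coordinates on $\tS$) give a $\wK^7$-to-$1$ parameterization by nine integral variables with coprimality governed by the fan of $\tS$, M\"obius inversion strips these conditions and reassembles into the Euler product, and lattice-point counting in discs plus a polytope volume produces $B(\log B)^6/6!$. Two small remarks. First, the lattice count is off by a factor of $2$: since the covolume of $\Od$ in $\CC$ is $\dK/2$, one has $\#\{z\in\Od : |z-z_0|\le t\}=\frac{2\pi}{\dK}t^2+O(t)$, not $\frac{\pi}{\dK}t^2+O(t)$; the $2^7$ in $c_{S,K,H}$ then arises as $2^9$ from nine such factors divided by the $4$ in the polytope volume $V=(4\cdot 6!)^{-1}$, not from ``fundamental-domain normalization'' directly. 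Second, where you gesture at dyadic decomposition and Abel summation to tame the accumulated $O(t)$ errors, the paper instead applies the disc count only to the three $(-1)$-curve variables $z_j$ and then handles the remaining six-variable harmonic sum $\M(B,\rr)$ by a direct sum-versus-integral comparison via translated fundamental domains (Lemma~\ref{lem:sum_integral}), which yields the leading constant exactly and keeps the error at $O((\log B)^5)$ with less bookkeeping; your strategy should also work but would be substantially more laborious to push through cleanly.
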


\begin{table}[h]
  \centering
  \begin{equation*}
    \begin{array}[h]{|c||c|c|c|c|c|c|c|c|c|}
      \hline
      n & -1 & -2 & -3 & -7 & -11 & -19 & -43 & -67 & -163 \\
      \hline\hline
      \wK & 4 & 2 & 6 & 2 & 2 & 2 & 2 & 2 & 2 \\
      \hline
      \dK & 2 & 2\sqrt 2 & \sqrt 3 & \sqrt 7 & \sqrt{11} & \sqrt{19} & \sqrt{43} & \sqrt{67} & \sqrt{163}\\
      \hline
    \end{array}
  \end{equation*}
  \caption{$K=\QQ(\sqrt n)$ with class number $h_K=1$.}
  \label{tab:numberfields}
\end{table}

We will see in Section~\ref{sec:peyre} that this result agrees with
the conjectures of Manin, Peyre, Batyrev and Tschinkel.

Recently, Frei \cite{arXiv:1204.0383} generalized our work to
arbitrary number fields, removing our restriction to class number $1$
and finite groups of units in the ring of integers.

\begin{ack}
  The authors are grateful to Tim Browning and the referee for helpful
  remarks. The first named author was supported by grant 200021\_124737/1 of
  the Schweizer Nationalfonds and by grant DE~1646/2-1 of the Deutsche
  Forschungsgemeinschaft.
\end{ack}

\section{Geometry}\label{sec:geometry}

In this section, we collect some facts on the geometry of our singular
cubic surface $S$. The construction of its minimal desingularization
as a blow-up of the projective plane in six points will be used in
Section~\ref{sec:passage} to construct a parameterization of the
$K$-rational points by integral points on a universal torsor.

Let $\SZ$ be the model of $S$ over $\ZZ$ defined by the equation
$x_0^3=x_1x_2x_3$ in $\PP^3_\ZZ$. We will consider the minimal
desingularization $\tSZ$ of $\SZ$, which is obtained from $\PP^2_\ZZ$
by a sequence of six blow-ups of points. All statements below will be
true not only over $\ZZ$ but, suitably rephrased, over any field.  In
what follows, all statements involving variables $j,k,l$ are meant to
hold for all
\begin{equation*}
  (j,k,l) \in \{(1,2,3),(2,3,1),(3,1,2)\}.
\end{equation*}

The surface $\SZ$ is singular, with
precisely three singularities
\begin{equation*}
  p_1=(0:1:0:0),\quad p_2=(0:0:1:0), \quad p_3=(0:0:0:1).
\end{equation*}
They are rational double points of type $\Atwo$ in the
$\mathbf{ADE}$-classification. It contains precisely three lines
$\ell_j=\{x_0=x_j=0\}$ through $p_k$ and $p_l$.

The surface $\SZ$ is toric. Indeed, an action of a two-dimensional torus
on $\SZ$ is given by
\begin{equation*}
  \GmZ^2 \times \SZ \to \SZ,\quad (\tt,\xx) \mapsto \tt
  \cdot \xx = (x_0:t_1x_1:t_2x_2:(t_1t_2)^{-1}x_3),
\end{equation*}
giving an isomorphism from $\GmZ^2$ to the open dense orbit
\begin{equation*}
  \UZ = \SZ \setminus(\ell_1\cup \ell_2 \cup \ell_3) = \{\xx \in \SZ \mid
  x_0x_1x_2x_3 \ne 0\}
\end{equation*}
of $(1:1:1:1)$, say. The corresponding fan can
be found in Figure~\ref{fig:fans}.

\begin{figure}[ht]
  \begin{tikzpicture}
    \draw[step=1cm,gray,thin,dotted] (-1,-2) grid (2,1);
    \begin{scope}[thick]
      \draw(0,0) -- (-1,1) node[anchor=south] {$\ell_2$};
      \draw(0,0) -- (2,1) node[anchor=west] {$\ell_1$};
      \draw(0,0) -- (-1,-2) node[anchor=north] {$\ell_3$};
    \end{scope}
  \end{tikzpicture}
  \begin{tikzpicture}
    \draw[step=1cm,gray,thin,dotted] (-1,-2) grid (2,1);
    \begin{scope}[thick]
      \draw(0,0) -- (1,0) node[anchor=west] {$E_{3,1}$};
      \draw(0,0) -- (0,1) node[anchor=south] {$E_{1,2}$};
      \draw(0,0) -- (-1,-1) node[anchor=east] {$E_{2,3}$};
    \end{scope}
    \begin{scope}[thick] 
      \draw(0,0) -- (-1,0) node[anchor=east] {$E_{3,2}$};
      \draw(0,0) -- (0,-1) node[anchor=north] {$E_{1,3}$};
      \draw(0,0) -- (1,1) node[anchor=south] {$E_{2,1}$};
    \end{scope}
    \begin{scope}[thick]
      \draw(0,0) -- (-1,1) node[anchor=south] {$E_2$};
      \draw(0,0) -- (2,1) node[anchor=west] {$E_1$};
      \draw(0,0) -- (-1,-2) node[anchor=north] {$E_3$};
    \end{scope}
  \end{tikzpicture}
  \begin{tikzpicture}
    \draw[step=1cm,gray,thin,dotted] (-1,-1) grid (1,1);
    \begin{scope}[thick]
      \draw(0,0) -- (1,0) node[anchor=west] {$E_{3,1}$};
      \draw(0,0) -- (0,1) node[anchor=south] {$E_{1,2}$};
      \draw(0,0) -- (-1,-1) node[anchor=north] {$E_{2,3}$};
    \end{scope}
    \begin{scope}[thick] 
      \draw(0,0) -- (-1,0) node[anchor=east] {$E_{3,2}$};
      \draw(0,0) -- (0,-1) node[anchor=north] {$E_{1,3}$};
      \draw(0,0) -- (1,1) node[anchor=south] {$E_{2,1}$};
    \end{scope}
  \end{tikzpicture}
  \begin{tikzpicture}
    \draw[step=1cm,gray,thin,dotted] (-1,-1) grid (1,1);
    \begin{scope}[thick]
      \draw(0,0) -- (1,0) node[anchor=west] {$E_{3,1}$};
      \draw(0,0) -- (0,1) node[anchor=south] {$E_{1,2}$};
      \draw(0,0) -- (-1,-1) node[anchor=north] {$E_{2,3}$};
    \end{scope}
  \end{tikzpicture}
  \caption{Fans of $\SZ, \tSZ, \tSZ_1, \PP^2_\ZZ$, respectively.}
  \label{fig:fans}
\end{figure}

Resolving the singularity $p_j$ gives two exceptional divisors
$E_{k,l}$ (meeting the strict transform $E_l$ of $\ell_l$) and
$E_{l,k}$ (meeting the strict transform $E_k$ of $\ell_k$). We obtain
the minimal desingularization $\pi: \tSZ \to \SZ$, where the Picard
group of $\tSZ$ is free of rank $7$. The six curves
$E_{j,k},E_{k,j}$ are $(-2)$-curves (rational curves with
self-intersection number $-2$), while the three transforms $E_j$ of
the lines are $(-1)$-curves (rational curves with self-intersection
number $-1$). There are no other negative curves (rational curves with
negative self-intersection number) of $\tSZ$. The negative curves
correspond precisely to the rays in the fan of $\tSZ$ in
Figure~\ref{fig:fans}.

The surface $\SZ$ is rational, via the birational map
\begin{equation*}
  \phi: \SZ \rto \PP^2_\ZZ,\ 
  \xx \mapsto (x_0^2:x_0x_1:x_1x_2)=(x_2x_3:x_0^2:x_0x_2)=(x_0x_3:x_1x_3:x_0^2),
\end{equation*}
(where the three expressions coincide where they are defined), with
inverse
\begin{equation*}
  \psi: \PP^2_\ZZ \rto \SZ, \quad 
  \zz \mapsto (z_0z_1z_2:z_1^2z_2:z_2^2z_0:z_0^2z_1).
\end{equation*}
Indeed, $\phi$ and $\psi$ restrict to isomorphisms between the open
subsets $\UZ \subset \SZ$ and $\VZ=\{\zz \in \PP^2_\ZZ \mid z_0z_1z_2
\neq 0\} \subset \PP^2_\ZZ$.

Then the following diagram commutes, where $\pi_0: \tSZ \to \PP^2_\ZZ$
is the blow-up of $\PP^2_\ZZ$ in six points in \emph{almost general
  position} \cite{MR579026}.
\begin{equation*}
  \xymatrix{\tSZ \ar@{->}[d]_\pi \ar@{->}[dr]^{\pi_0}& \\
    \SZ\ar@{-->}[r]_\phi & \PP^2_\ZZ}
\end{equation*}
More precisely, $\pi_0$ maps
\begin{itemize}
\item $E_1, E_{2,1}$ to $(1:0:0)$,
\item $E_2, E_{3,2}$ to $(0:1:0)$,
\item $E_3, E_{1,3}$ to $(0:0:1)$,
\item $E_{2,3}, E_{3,1}, E_{1,2}$ to $\{z_0=0\},
  \{z_1=0\}, \{z_2=0\}$, respectively.
\end{itemize}

Conversely, using the same symbol for divisors on $\tSZ$ and their
projections and strict transforms on $\PP^2_\ZZ$ and the intermediate
$\tSZ_1$ (for example, on $\PP^2_\ZZ$, we have $E_{2,3}=\{z_0=0\}$,
$E_{3,1}=\{z_1=0\}$ and $E_{1,2}=\{z_2=0\}$) we obtain
\begin{equation*}
  \pi_0:\tSZ \xrightarrow{\pi_2}  \tSZ_1 \xrightarrow{\pi_1} \PP^2_\ZZ,
\end{equation*}
where $\tSZ_1$ is a smooth sextic del Pezzo surface, by
\begin{itemize}
\item blowing up the three points $E_{j,k} \cap E_{k,l} \in \PP^2_\ZZ$
  with exceptional divisors $E_{l,k}$, respectively, to obtain $\pi_1:
  \tSZ_1 \to \PP^2_\ZZ$;
\item blowing up the three points $E_{k,j} \cap E_{l,j} \in \tSZ_1$
  with exceptional divisors $E_j$, respectively, to obtain $\pi_2:
  \tSZ \to \tSZ_1$.
\end{itemize}
This gives $\tSZ$ with three $(-1)$-curves $E_j$ and six
$(-2)$-curves $E_{j,k},E_{k,j}$. Contracting the
$(-2)$-curves via the anticanonical map gives $\pi: \tSZ \to \SZ
\subset \PP^3_\ZZ$.

\section{The leading constant}\label{sec:peyre}

For a smooth Fano variety defined over a number field $K$, Peyre
\cite[Conjecture~2.3.1]{MR1340296} gave a conjectural interpretation
of the leading constant in Manin's conjecture. This was generalized to
Fano varieties with at worst canonical singularities by Batyrev and
Tschinkel \cite[\S 3.4 Step~4]{MR1679843}. We will
see that our theorem agrees with this prediction.

We start by collecting all number theoretic notation we need for this
section.  Let $(r_K,s_K)$ be the number of real resp.\ pairs of
complex embeddings of $K$, and let $q_K=r_K+s_K-1$. Let $\Od$ be its
ring of integers.  Let $\Odnz = \Od \setminus \{0\}$. Let $\wK$ be
number of roots of unity in $\Od$, and let $R_K$ be the regulator of
$K$. Let $\dK$ denote the square root of the absolute value of the
discriminant of $K$.

The set $M_K$ of places of $K$ consists of the archimedian places
$M_{K,\infty}$ and the non-archimedian places $M_{K,f}$. For $\nu \in
M_K$, let $K_\nu$ be the completion of $K$ at $\nu$ and, for $\nu \in
M_{K,f}$, let $\FF_\nu$ be the residue field. For any $\nu \in
M_{K,f}$, we define a norm by $\|x\|_\nu = |N_{K_\nu/\QQ_p}(x)|_p$ for
all $x \in K_\nu$, where $p$ the characteristic of $\FF_\nu$ and
$|\cdot|_p$ is the usual norm on $\QQ_p$. For any $\nu \in
M_{K,\infty}$ corresponding to a real embedding $\sigma: K \to \RR$,
we define $\|x\|_\nu = |\sigma(x)|$ for all $x \in K_\nu$, where
$|\cdot|$ is the usual absolute value on $\RR$. For any $\nu \in
M_{K,\infty}$ corresponding to a pair of complex embeddings
$\sigma,\sigma'$, we define $\|x\|_\nu = |\sigma(x)|^2$, where
$|\cdot|$ is the usual absolute value on $\CC$.

We compute the expected asymptotic behavior of $N_{U,K,H}(B)$ with respect to
the very ample anticanonical metrized sheaf $-\K_S = (-K_S, \|\cdot\|_\nu)$
\cite[Definition~3.1.3]{MR1679843}, where the family of $\nu$-adic metrics
corresponds to our anticanonical height function $H$.

We use the minimal desingularization $\pi: \tS \to S$ and its integral model
$\pi: \tSZ \to \SZ$ constructed in Section~\ref{sec:geometry}. As the
$\Atwo$-singularities on $S$ are rational double points, we have $\pi^*(K_S) =
K_\tS$. Therefore, the $-\K_S$-index \cite[Definition~2.2.4]{MR1679843} of $S$
is $1$, and the $-\K_S$-rank \cite[Definition~2.3.11]{MR1679843} of $S$ is
$\rk\Pic(\tS)=7$. Hence the expected asymptotic formula according to
Manin's conjecture is (in the notation of \cite[\S 3.4 Step~4]{MR1679843})
\begin{equation*}
  N_{U,K,H}(B)=\frac{\gamma_{-\K_S}(U)}{6!}\delta_{-\K_S}(U)\tau_{-\K_S}(U) B
  (\log B)^6(1+o(1)).
\end{equation*}

The cohomological factor of the expected leading constant is
\begin{equation*}
  \delta_{-\K_S}(U)=\#H^1(\Gal(\Qbar/K),\Pic(\tS_\Qbar)) = 1,
\end{equation*}
as $\Gal(\Qbar/K)$ acts trivially on $\Pic(\tS_\Qbar)$ since $\tS$ is split
over $K$.

The factor $\gamma_{-\K_S}(U)/6!$ \cite[Definition~2.3.16]{MR1679843} is
simply $\alpha(\tS)$ as in \cite[D\'efinition~2.4]{MR1340296}. By
\cite[Theorem~1.3]{MR2377367}, we have
\begin{equation*}
  \frac{\gamma_{-\K_S}(U)}{6!} = \alpha(\tS) = 
\frac{\alpha(S_0)}{\#W(3\Atwo)} = \frac{1}{120\cdot (3!)^3} = \frac{1}{36\cdot 6!},
\end{equation*}
where $S_0$ is a smooth cubic surface with $\alpha(S_0)=1/120$ by
\cite[Theorem~4]{MR2318651} and $W(3\Atwo)$ is the Weyl group of the root
system $3\Atwo$ associated to the singularities of $S$.

Next, we compute the Tamagawa number
\begin{equation*}
\tau_{-\K_S}(U) = \lim_{s\to 1}(s-1)^7
L(s,\Pic(\tS_\Qbar))\int_{\tS(\AA_K)}\omega_{-\K_S}
\end{equation*}
\cite[Definition~3.3.10]{MR1679843}, where the set $\tS(\AA_K)$ of adelic
points coincides with the closure of $\tS(K)$ in it since $\tS$ satisfies weak
approximation. Here, we have used that every non-archimedian valuation $\nu$
is a good valuation in the sense of \cite[Definition~3.3.5]{MR1679843} since
the reduction of the model $\tSZ$ of $\tS$ at any finite place of $K$ is a smooth
projective variety.

Since $\tSZ$ is split, the Frobenius morphism associated to every
non-archime\-dian place $\nu$ corresponding to a prime ideal $\gothp$
acts trivially on $\Pic(\tSZ_{\Fbar_\nu})$ of rank $7$. Therefore,
$L_\nu(s,\Pic(\tS_\Qbar))=(1-\Norm\gothp^{-s})^{-7}$ (cf.\ \cite[\S
2.2.3]{MR1340296}), and $L(s,\Pic(\tS_\Qbar)) = \prod_{\nu \in
  M_{K,f}} L_\nu(s,\Pic(\tS_\Qbar)) = \zeta_K(s)^7$. So
\begin{equation*}
  \lim_{s\to 1}(s-1)^7 L(s,\Pic(\tS_\Qbar))=\lim_{s\to
    1}(s-1)^7\zeta_K(s)^7  =\left(\frac{2^{r_K}(2\pi)^{s_K}h_KR_K}{\wK\dK}\right)^7
\end{equation*}
by the analytic class number formula.

Furthermore, by \cite[Definition~3.3.9]{MR1679843},
\begin{equation*}
  \int_{\tS(\AA_K)} \omega_{-\K_S} = \dK^{-\dim(\tS)} \prod_{\nu\in
    M_K}\lambda_\nu^{-1}d_\nu(U)
\end{equation*}
where $\lambda_\nu = L_\nu(1,\Pic(\tS_\Qbar))$ for all (good) non-archimedian
places and $\lambda_\nu = 1$ for the archimedian places. It remains to compute
the local densities $d_\nu(U)$ defined in \cite[Remark~3.3.2]{MR1679843}.

We compute the archimedian densities on the open subset $U = \{x_0 \ne 0\}$ of
$S$, defined by the cubic equation $f(x_0, \dotsc, x_3)=x_0^3-x_1x_2x_3$, as
\begin{equation*}
  d_\nu(U) = \int_{S(K_\nu)} \omega_{-\K_S}(g)=
  \begin{cases}
    36, &\text{$\nu$ real,}\\
    36\pi^2, &\text{$\nu$ complex.}
  \end{cases}
\end{equation*}
Indeed, we apply \cite[Lemme~5.4.4]{MR1340296} and see via the birational
morphism $\rho: U \to \AA^2$ defined by $(x_0:x_1:x_2:x_3) \mapsto (x_1/x_0,
x_2/x_0)$ that
\begin{equation*}
  d_\nu(U) = \int_{(K_\nu^\times)^2}
  \frac{1}{\max\{1,\|y_1\|_\nu,\|y_2\|_\nu,\|(y_1y_2)^{-1}\|_\nu\}\cdot
  \|-y_1y_2\|_\nu} \ddd y_{1,\nu} \ddd y_{2,\nu}.
\end{equation*}
A straightforward computation gives the values above. For complex $\nu$, the
Haar measure $\ddd y_{i,\nu}$ on $K_\nu$ is normalized as \emph{twice} the
usual Lebesgue measure obtained from regarding as $K_\nu \cong \CC$ as
$\RR^2$, as in \cite[\S 1.1]{MR1340296}. For real $\nu$, the value $d_\nu(S) =
36$ can also be found in \cite[\S 5.3]{MR1679843}.

As every non-archimedian place $\nu$ corresponding to a prime ideal $\gothp$
is good, we can apply \cite[Theorem~3.3.7]{MR1679843} to compute
\begin{equation*}
  d_\nu(U) = \frac{\#\tS(\FF_\gothp)}{\Norm\gothp^2} = 1+\frac{7}{\Norm\gothp}+\frac{1}{\Norm\gothp^2},
\end{equation*}
since the norm $\Norm\gothp$ is the cardinality of the
residue field $\FF_\nu$ of $K_\nu$.  Indeed, for any finite field $\FF_q$, the
surface $\tSZ_{\FF_q}$ is the blow-up of $\PP^2_{\FF_q}$ in six
$\FF_q$-rational points, and any such blow-up replaces one
$\FF_q$-rational point by a rational curve containing $q+1$ points
over $\FF_q$. Since $\#\PP^2_\ZZ(\FF_q)=q^2+q+1$, we obtain the
result. See also \cite[Lemma~2.3]{MR2769338}.

In total, the expected leading constant is
\begin{equation*}
  \frac{9^{q_K}}{4\cdot 6!} \left(\frac{2^{r_K}(2\pi)^{s_K}}{\dK}\right)^9
  \left(\frac{h_KR_K}{\wK}\right)^7 \prod_\gothp
  \left(1-\frac{1}{\Norm\gothp}\right)^7\left(1+\frac{7}{\Norm\gothp}+\frac{1}{\Norm\gothp^2}\right).
\end{equation*}

For imaginary quadratic number fields $K$ with class number $h_K=1$,
we have $(r_K,s_K)=(0,1)$, so $q_K=0$.  Since the number $\wK$ of
units in $\Od$ is finite, its regulator $R_K$ is $1$. We denote the
archimedian place as $\nu=\infty$. Let $\Nd$ be a fundamental domain
for $\Odnz$ modulo the action of the units. We identify each prime
ideal $\gothp$ with its unique generator $p \in \Nd$, with
$\Norm\gothp = \|p\|_\infty$. We see that the expected leading
constant of \cite{MR1679843} coincides with $c_{S,K,H}$ in our main
theorem.

\section{Passage to a universal torsor}\label{sec:passage}

We follow the strategy of \cite{MR2290499}. This leads to a
parameterization of rational points on $S$ by integral points in
$\AA^9$ that is similar to the one used in \cite{MR2000f:11080}, but
with a different set of coprimality conditions. We could construct
coprimality conditions as in \cite{MR2000f:11080}, but we believe our
conditions are more closely connected to the geometry of $\tS$ and
easier to work with. We note that our coprimality conditions are
analogous to the ones obtained by Salberger \cite[11.5]{MR1679841} for
toric varieties over $\QQ$.

In the following, any
statement involving $j,k,l$ is meant to hold for all
\begin{equation*}
  (j,k,l) \in \{(1,2,3),(2,3,1),(3,1,2)\}.
\end{equation*}

A parameterization of $K$-rational points on $U \subset S$ is
obtained via the map $\psi$ defined in Section~\ref{sec:geometry}.
The isomorphism $\psi_{|V}: V \to U$ induces a map
\begin{equation*}
  \Psi_0:(\Odnz)^3 \to S(K), \quad 
  \yy \mapsto \Psi_0(\yy) = 
  (\Psi_0(\yy)_0: \dotso : \Psi_0(\yy)_3)
\end{equation*}
where $\yy=(y_{2,3},y_{3,1},y_{1,2})$ and
\begin{equation*}
  \Psi_0(\yy)_0=y_{1,2}y_{3,1}y_{2,3}, \ 
  \Psi_0(\yy)_j = y_{j,k}y_{l,j}^2.
\end{equation*}
This induces a $\wK$-to-$1$ map from
\begin{equation*}
  \{(y_{2,3},y_{3,1},y_{1,2}) \in (\Odnz)^3 \mid 
  H(\Psi_0(\yy)) \le B,\ \gcd(y_{1,2},y_{2,3},y_{3,1})=1\}
\end{equation*}
to
\begin{equation*}
  N_0(B) = \{\xx \in U(K) \mid H(\xx) \le B\}.
\end{equation*}

However, this parameterization is not good enough to start counting
integral elements in a region in $\Od^3$ because the height condition
is not as easy as one might hope since $\gcd(y_{1,2}y_{2,3}y_{3,1},
y_{1,2}y_{3,1}^2, y_{2,3}y_{1,2}^2, y_{3,1}y_{2,3}^2)$ (taken here and
always in $\Od$) may be non-trivial even if
$\gcd(y_{1,2},y_{2,3},y_{3,1})=1$.

Motivated by the construction of $\tSZ$ as the blow-up of $\PP^2_\ZZ$ in
intersection points of certain divisors, we modify this as follows.

In the first step, let $y_{l,k} = \gcd(y_{j,k},y_{k,l})$. Write
$y_{j,k}=y_{j,k}'y_{k,j}y_{l,k}$. Then
$\gcd(y_{j,k}',y_{k,l}')=\gcd(y_{l,k},y_{k,j})=\gcd(y_{k,j},y_{k,l}')=1$.
Now we drop the $'$ again for notational simplicity. We obtain a map
\begin{equation*}
  \Psi_1: (\Odnz)^6 \to S(K), \quad 
  \yy \mapsto \Psi_1(\yy)=(\Psi_1(\yy)_0: \dotso : \Psi_1(\yy)_3),
\end{equation*}
where $\yy=(y_{1,2},y_{2,1},y_{1,3},y_{3,1},y_{2,3},y_{3,2})$ and
\begin{equation*}
  \Psi_1(\yy)_0=y_{1,2}y_{2,1}y_{1,3}y_{3,1}y_{2,3}y_{3,2}, \quad 
  \Psi_1(\yy)_j = y_{j,k}y_{j,l}y_{k,j}^2y_{l,j}^2.
\end{equation*}

We note that the coprimality conditions can be expressed as follows:
For $(u,v) \in \{(1,2),(2,1),(1,3),(3,1),(2,3),(3,2)\}$, we have
$\gcd(y_u,y_v)=1$ if and only if the divisors $E_u$ and
$E_v$ do not intersect on $\tSZ_1$, which holds if and only if the
corresponding rays in the fan of $\tSZ_1$ (Figure~\ref{fig:fans}) are
not neighbors.

Since the $y_{k,j}$ are unique up to
units in $\Od$, the map $\Psi_1$ induces a $\wK^4$-to-$1$ map from
\begin{equation*}
  \{\yy \in (\Odnz)^6 \mid H(\Psi_1(\yy)) \le B, 
  \text{coprimality as in the fan of $\tSZ_1$ in Figure~\ref{fig:fans}}\}
\end{equation*}
to $N_0(B)$.

In the second step, let $y_j=\gcd(y_{k,j},y_{l,j})$. As before, we
obtain a map
\begin{equation*}
  \Psi_2:(\Odnz)^9 \to S(K),\quad 
  \yy \mapsto \Psi_2(\yy)=(\Psi_2(\yy)_0: \dotso: \Psi_2(\yy)_3),
\end{equation*}
where $\yy=(y_1,y_2,y_3,y_{1,2},y_{2,1},y_{1,3},y_{3,1},y_{2,3},y_{3,2})$ and
\begin{equation*}
  \Psi_2(\yy)_0=y_1y_2y_3y_{1,2}y_{2,1}y_{1,3}y_{3,1}y_{2,3}y_{3,2}, \quad 
  \Psi_2(\yy)_j = y_j^3y_{j,k}y_{j,l}y_{k,j}^2y_{l,j}^2.
\end{equation*}
This induces a $\wK^7$-to-$1$ map from
\begin{equation*}
  \{\yy \in (\Odnz)^9 \mid H(\Psi_2(\yy)) \le B, 
  \text{coprimality as in the fan of $\tSZ$ in Figure~\ref{fig:fans}}\}
\end{equation*}
to $N_0(B)$.

Now we note that
\begin{equation*}
H(\Psi_2(\yy))=\max\{\|\Psi_2(\yy)_1\|_\infty,\|\Psi_2(\yy)_2\|_\infty,\|\Psi_2(\yy)_3)\|_\infty\}
\end{equation*}
because the coprimality conditions imply that $\Psi_2(\yy)_0, \dotsc,
\Psi_2(\yy)_3$ are coprime for any $\yy$ satisfying the coprimality
conditions, and the archimedian norm of $\Psi_2(\yy)_0$ cannot be
larger than all other three. Indeed, the second observation follows
from $\Psi_2(\yy)_0^3=\Psi_2(\yy)_1\Psi_2(\yy)_2\Psi_2(\yy)_3$. For
the first observation, we note that any prime may divide at most two
variables whose corresponding rays in Figure~\ref{fig:fans} are
neighbors, and one checks that for each such pair of variables, there
is one monomial in which these variables do not occur.

\begin{figure}[ht]
  \centering
  \begin{equation*}
    \xymatrix{(1) \ar@{-}[r]\ar@{-}[d] & (2,1) \ar@{-}[rr] & & (1,2) \ar@{-}[r] & (2) \\
      (3,1) \ar@{-}[r] & (1,3) \ar@{-}[r] & (3) \ar@{-}[r] & (2,3) \ar@{-}[r] & (3,2)\ar@{-}[u]
    }
  \end{equation*}
  \caption{Graph $G=(V,E)$ encoding coprimality conditions.}\label{fig:dynkin}
\end{figure}

We reformulate the coprimality conditions as follows, using the graph
$G=(V,E)$ with nine vertices $V=\{(1),(2,1),\dotsc\}$ and nine edges
$E=\{\{(1),(2,1)\},\{(2,1),(1,2)\}, \dotsc\}$ in
Figure~\ref{fig:dynkin}.

\begin{lemma}\label{lem:passage}
  Let $G=(V,E)$ be the graph in Figure \ref{fig:dynkin}. Let $E'$ be
  the set all pairs $\{u,v\}$ of vertices $u,v \in V$ which are not
  adjacent in the graph.
  
  We have
  \begin{equation*}
    N_{U,K,H}(B) = \frac{1}{\wK^7} \sum_{\substack{\yy \in (\Odnz)^V\cap M(B) \\
        \text{$\gcd(y_u,y_v)=1$ for all $\{u,v\}\in E'$}}} 1,
  \end{equation*}
  where $M(B)$ is the set of all $\yy\in\CC^V$ with
  \begin{equation*}
    \|y_j^3y_{j,k}y_{j,l}y_{k,j}^2y_{l,j}^2\|_\infty \le B
  \end{equation*}
  for all $(j,k,l) \in \{(1,2,3),(2,3,1),(3,1,2)\}$.
\end{lemma}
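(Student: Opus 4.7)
The plan is to formalize the constructions of $\Psi_0$, $\Psi_1$, $\Psi_2$ from the text preceding the lemma, track the multiplicity factor at each stage, translate the coprimality conditions into non-adjacency in the graph $G$, and verify that the height reduces to the archimedean maximum over the three non-trivial monomials.

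First, I would check that $\Psi_0$ induces a $\wK$-to-$1$ surjection onto $N_0(B)$. Since $K$ has class number $1$, every point of the open subset $\{z_0z_1z_2\neq 0\}$ of $\PP^2(K)$ is represented by a triple $(y_{2,3},y_{3,1},y_{1,2})\in (\Odnz)^3$ with $\gcd(y_{1,2},y_{2,3},y_{3,1})=1$, and any two such triples differ by a simultaneous unit scaling; composing with the isomorphism $\psi$ from Section~\ref{sec:geometry} gives the claim. The next two refinements each extract three gcds of pairs of existing variables, namely $y_{l,k}=\gcd(y_{j,k},y_{k,l})$ to obtain $\Psi_1$, and then $y_j=\gcd(y_{k,j},y_{l,j})$ to obtain $\Psi_2$. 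Each extracted gcd is determined only up to a unit in $\Od^\times$, so each refinement contributes a factor of $\wK^3$, giving the cumulative multiplicities $\wK^4$ and $\wK^7$ respectively. The refined coprimality conditions read off from the definitions match precisely the non-adjacency of the corresponding rays in the fans of $\tSZ_1$ and $\tSZ$ in Figure~\ref{fig:fans}, and rewriting these via the graph in Figure~\ref{fig:dynkin} yields exactly the condition that $\{u,v\}\in E'$.

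Second, I would show that on the image of $\Psi_2$ the global height reduces to the archimedean maximum over the three monomials $\Psi_2(\yy)_1, \Psi_2(\yy)_2, \Psi_2(\yy)_3$. This requires two facts. The four integers $\Psi_2(\yy)_0,\ldots,\Psi_2(\yy)_3$ are coprime: any prime $\gothp$ can divide at most two of the nine $y_\bullet$ simultaneously, and these two must correspond to adjacent rays in the fan of $\tSZ$; inspection shows that for each such adjacent pair at least one of the four monomials omits both variables. Combined with class number $1$, this forces the non-archimedean contributions to $H(\Psi_2(\yy))$ to be trivial. Moreover, the defining cubic relation $\Psi_2(\yy)_0^3=\Psi_2(\yy)_1\Psi_2(\yy)_2\Psi_2(\yy)_3$ implies $\|\Psi_2(\yy)_0\|_\infty\le \max_{j}\|\Psi_2(\yy)_j\|_\infty$, so the zeroth coordinate contributes nothing to the maximum and the height condition becomes exactly $\yy \in M(B)$.

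The main obstacle is bookkeeping rather than ideas: one must check carefully that at each of the three stages the gcd-extraction introduces exactly the expected factor of $\wK$ per extracted variable, that the refined coprimality conditions match the fan-adjacency structure of the appropriate intermediate surface, and that the coprimality of $\Psi_2(\yy)_0,\ldots,\Psi_2(\yy)_3$ really does hold in the form stated. Once this is in place, the lemma follows by relabelling the variable tuple as an element of $(\Odnz)^V$ indexed by the rays of the fan of $\tSZ$.
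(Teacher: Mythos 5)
Your proposal is correct and follows essentially the same route as the paper: the two successive gcd-extractions (each producing three new variables determined up to units, hence a factor $\wK^3$ per stage), the identification of the resulting coprimality conditions with non-adjacency of rays in the fans of $\tSZ_1$ and $\tSZ$, and the reduction of the height to the archimedean maximum via joint coprimality of $\Psi_2(\yy)_0,\dotsc,\Psi_2(\yy)_3$ and the relation $\Psi_2(\yy)_0^3=\Psi_2(\yy)_1\Psi_2(\yy)_2\Psi_2(\yy)_3$ all mirror the paper's argument. (One small slip in wording: there are only two gcd-refinement stages, $\Psi_0\to\Psi_1$ and $\Psi_1\to\Psi_2$, each contributing $\wK^3$; the initial step contributes a single factor $\wK$ from the projective rescaling.)
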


\section{M\"obius inversions}\label{sec:moebius}

Having found a suitable parameterization of $K$-rational points by
points over $\Od$ in an open subset of $\AA^9$ in
Lemma~\ref{lem:passage}, the main problem is essentially to estimate
the number of lattice points in the region described by the height
conditions. This is done in Lemma~\ref{lem:summations}; its proof is
defered to Section~\ref{sec:lattice}. Here, we remove the coprimality
conditions by a M\"obius inversion and recover the non-archimedian
densities.

Applying M\"obius inversion over all elements of $E'$ to the
expression in Lemma~\ref{lem:passage} gives
\begin{equation*}
  N_{U,K,H}(B)=\frac{1}{\wK^7}\sum_{\dd \in\Nd^{E'}}\prod_{\alpha\in
    E'}\mu(d_\alpha)\sum_{\substack{\yy\in(\Odnz)^V\cap M(B)\\d_{\{u,v\}}|y_u,y_v\,\forall\{u,v\}\in
      E'}}1.
\end{equation*}
We collect all terms dividing some $y_v$ to obtain
\begin{equation*}
  N_{U,K,H}(B)=\frac{1}{\wK^7}\sum_{\dd\in\Nd^{E'}}\prod_{\alpha\in E'}
  \mu(d_\alpha)\sum_{\substack{\yy\in(\Odnz)^V\cap M(B)\\r_v|y_v\,\forall v\in V}}1,
\end{equation*}
where $r_v$ is defined as the lowest common multiple of the $d_\alpha$
with $\alpha\in E'$ and $v\in\alpha$. This sum can be estimated as
follows; see Section~\ref{sec:lattice} for the proof.

\begin{lemma}\label{lem:summations}
  For $\rr\in\Nd^V$, let
  \begin{equation*}
    R_1=\prod_{v\in V}\|r_v\|_\infty, \quad 
    R_2=\prod_{\substack{j, k \in \{1,2,3\}\\j \ne k}}\|r_{j,k}\|^{2/3}_\infty
    \prod_{j \in \{1,2,3\}}\|r_j\|_\infty(\max_j\|r_j\|_\infty)^{-1/2}.
  \end{equation*}
  Then
  \begin{equation*}
    \sum_{\substack{\yy\in(\Odnz)^V\cap M(B)\\r_v|y_v\,\forall v\in V}}1
    =\frac{2^7\pi^9}{6!\dK^9}\frac B{R_1}(\log(B))^6
    +O\left(\frac B{R_2}(\log(B))^5\right).
  \end{equation*}
\end{lemma}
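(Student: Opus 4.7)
The plan is to substitute $y_v = r_v y'_v$ (so that $y'_v \in \Odnz$ ranges without divisibility conditions) and count $\yy' \in (\Odnz)^V$ in the scaled region
\begin{equation*}
  \|(y'_j)^3 y'_{j,k} y'_{j,l} (y'_{k,j})^2 (y'_{l,j})^2\|_\infty \le B_j := \frac{B}{\|r_j\|_\infty^3 \|r_{j,k} r_{j,l}\|_\infty \|r_{k,j} r_{l,j}\|_\infty^2}
\end{equation*}
for each cyclic triple $(j,k,l)$. I will count in two stages: first sum over the three variables $y'_1, y'_2, y'_3$ (corresponding to the $(-1)$-curves), then sum over the six variables $y'_{j,k}$ (corresponding to the $(-2)$-curves).

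In the first stage, for each fixed tuple of the six $y'_{j,k}$, the $j$-th inequality becomes $\|y'_j\|_\infty \le c_j$ with $c_j := \bigl(B_j/\|y'_{j,k} y'_{j,l} (y'_{k,j})^2 (y'_{l,j})^2\|_\infty\bigr)^{1/3}$. I apply the two-dimensional lattice-point estimate
$\#\{y \in \Od : \|y\|_\infty \le c\} = 2\pi c/d_K + O(c^{1/2})$
to each $y'_j$ independently and multiply; the main term is $(2\pi/d_K)^3 \prod_j c_j$. A short exponent count shows each of the six $y'_{j,k}$ appears with total exponent $3$ in $\prod_j c_j^3$, so $\prod_j c_j = (B/R_1) \prod_{(j,k)} \|y'_{j,k}\|_\infty^{-1}$.

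In the second stage, the main term aggregated over the six variables is
\begin{equation*}
  \Bigl(\frac{2\pi}{d_K}\Bigr)^3 \frac{B}{R_1} \sum_{\substack{(y'_{j,k})\in(\Odnz)^6 \\ c_j \ge 1\, \forall j}} \prod_{(j,k)\colon j\ne k} \frac{1}{\|y'_{j,k}\|_\infty}.
\end{equation*}
Iterating the one-variable bound $\sum_{0<\|y\|_\infty \le T} \|y\|_\infty^{-1} = (2\pi/d_K)\log T + O(1)$ obtained by Abel summation from the same circle estimate --- or, equivalently, passing to logarithmic coordinates $\tau_{j,k} = \log\|y'_{j,k}\|_\infty$ --- the inner sum equals $(2\pi/d_K)^6$ times the volume of the bounded polytope $\{\tau_{j,k} \ge 0,\ L_j(\tau) \le \log B_j\}$ in $\RR^6$, where $L_j$ is the linear form attached to the $j$-th height condition on the $y'_{j,k}$. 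A direct computation of this volume (using that $L_1+L_2+L_3 = 3\sum_{(j,k)}\tau_{j,k}$) yields the leading volume $(\log B)^6/(4\cdot 6!) + O((\log B)^5)$, and combining with the first stage produces the stated main term $\frac{2^7 \pi^9}{6! d_K^9}\cdot\frac{B(\log B)^6}{R_1}$.

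The main obstacle is the error analysis. Expanding $\prod_j(2\pi c_j/d_K + O(c_j^{1/2}))$ yields seven error contributions in which one, two, or three of the $c_j$'s are replaced by $c_j^{1/2}$; each must be summed over the six $y'_{j,k}$ subject to the three constraints and bounded by $O(B(\log B)^5/R_2)$. The peculiar shape of $R_2$ reflects this: the $2/3$-powers on each $\|r_{j,k}\|_\infty$ are forced by the symmetric appearance of each $(-2)$-curve variable in two out of three height monomials, so a square-root saving on any single $c_j$ translates into a fractional gain on each $r_{j,k}$; and the factor $(\max_j\|r_j\|_\infty)^{-1/2}$ encodes that the $O(c_j^{1/2})$ error is applied optimally to the index $j$ for which $c_j$ (equivalently $\|r_j\|_\infty^{-1}$ weighted appropriately) is largest. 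Carefully tracking the $\|\cdot\|_\infty$-exponents across all seven expansion patterns, and controlling the additional error from replacing the six-variable sum by its polytope integral, is the technically demanding step.
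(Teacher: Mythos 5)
Your outline follows the paper's proof quite closely: the substitution $y_v = r_v y'_v$, the two-stage summation (first over the three $(-1)$-curve variables via the circle estimate $\frac{2\pi}{\dK}c + O(\sqrt{c})$, then over the six $(-2)$-curve variables), and the polytope-volume evaluation with $V = 1/(4\cdot 6!)$ are all exactly what the paper does (the paper's $C_j$, $\M(B,\rr)$ and $\I(B)$ correspond to your $c_j$, inner sum and logarithmic integral). So the plan is on the right track.

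However, the part you explicitly defer --- ``Carefully tracking the exponents across all seven expansion patterns, and controlling the additional error from replacing the six-variable sum by its polytope integral, is the technically demanding step'' --- is where essentially all of the paper's work lies, and simply asserting that it can be done leaves a genuine gap. Two specific points. First, the error from the circle estimate is not disposed of by bookkeeping alone: the paper collapses the seven terms to $O(C_1C_2C_3 \max_j C_j^{-1/2})$, pulls out $B^{5/6}/R_2$, and is then left with a residual sum $\R(B,\rr) = \sum \prod_{j,k}\|z_{j,k}\|_\infty^{-1}\max_j\|\zeta_j\|_\infty^{1/6}$ whose estimation requires an Abel-summation lemma (the paper's Lemma~\ref{lem:aux}) together with a divisor-function bound $\sum_{\|u\|\le U} d(u)\|u\|^{-5/6} \ll U^{1/6}\log B$; none of this is routine iteration of a one-variable bound. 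Second, ``iterating the one-variable bound $\sum_{\|y\|\le T}\|y\|^{-1}=(2\pi/\dK)\log T + O(1)$'' over a coupled six-dimensional region does not by itself yield the comparison of the sum $\M(B)$ with the integral $\I(B)$ with an acceptable error; the paper needs a separate fundamental-domain argument (Lemma~\ref{lem:sum_integral}), sandwiching each lattice point between translates of a fixed fundamental domain and controlling the boundary strips near the axes and near $\|z\|_\infty = 1$, plus a preliminary reduction of $\M(B,\rr)$ to $\M(B,(1,\dotsc,1))$ via the monotonicity sandwich $\M(B/R_3^6,(1,\dotsc,1)) \le \M(B,\rr) \le \M(B,(1,\dotsc,1))$. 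Your sketch does not address either step.

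One smaller point: your explanation of the factor $(\max_j\|r_j\|_\infty)^{-1/2}$ in $R_2$ has the direction reversed. The dominant among the seven error terms is the one replacing $c_j$ by $c_j^{1/2}$ for the index where $c_j$ is \emph{smallest} (equivalently, $\|r_j\|_\infty$ is \emph{largest}), which is why the bound carries the factor $\max_j C_j^{-1/2}$ and $R_2$ loses a factor $(\max_j\|r_j\|_\infty)^{1/2}$; applying the square-root saving to the index with $c_j$ largest would give the smallest, not the controlling, contribution.
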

Combining this with Lemma~\ref{lem:passage} gives
\[N_{U,K,H}(B)=\frac{2^7\pi^9}{6!\wK^7\dK^9}\omega B(\log(B))^6+O(\rho
B(\log B)^5),\] where
\[\omega=\sum_{\dd\in\Nd^{E'}}\prod_{\alpha\in
  E'}\mu(d_\alpha)\frac{1}{R_1}, \quad
\rho=\sum_{\dd\in\Nd^{E'}}\prod_{\alpha\in
  E'}|\mu(d_\alpha)|\frac{1}{R_2}
\]
with $R_1$ and $R_2$ depending on $\dd$.

To show that $\omega$ and $\rho$ are well-defined it will enough to
show the convergence of the defining sum of $\rho$ since $|\omega| \le
\rho$. The Euler factor of $\rho$ corresponding to some prime $p \in
\Nd$ is $1+O(\|p\|_\infty^{-7/6})$. Indeed, the factor will have only
finitely many non-vanishing summands since $\mu(p^e)=0$ for all $e\ge
2$. For $\dd=(1,\dotsc,1)$, we have $R_2=1$. For $\dd$ with
$d_\alpha=p$ for at least one $\alpha=\{u,v\} \in E'$, we have
$r_u=r_v=p$ and therefore $R_2\ge\|r_u\|_\infty^{a} \|r_v\|_\infty^{b}
\ge \|p\|_\infty^{7/6}$ where $a,b \in \{\frac 1 2, \frac 2 3, 1\}$,
with at most one of them equal to $\frac 1 2$.

Let us now calculate the Euler factors $A_p$ of $\omega$ for some
prime $p \in \Nd$. Let $A\in\ZZ[x]$ be the polynomial
\[A(x)=\sum_{\tdd \in \{0,1\}^{E'}}\prod_{\alpha\in
  E'}\tmu(\td_\alpha)x^{\sum_{v\in V}\tr_v}.\] Here, $\trr \in
\{0,1\}^V$ is defined depending on $\tdd$ as follows: For any $v\in
V$, the number $\tr_v$ is the maximum of all $\td_\alpha$ with
$v\in\alpha$. Furthermore, $\tmu$ is defined by
\begin{equation*}
  \tmu(n)=\begin{cases}
    1, & n=0,\\
    -1, & n=1.
  \end{cases}
\end{equation*}
Then $A_p=A(\|p\|_\infty^{-1})$.

By further M\"obius inversions, we have
\begin{align*}
  A(x)&=\sum_{\tkk \in \{0,1\}^V} x^{\sum_{v \in V} \tk_v} \sum_{\substack{\tdd \in \{0,1\}^{E'}\\\trr = \tkk}} \prod_{\alpha \in E'} \tmu(\td_\alpha)\\
  &=\sum_{\tnn\in\{0,1\}^V}\prod_{v\in V}e(\tn_v)\sum_{\substack{\tdd\in\{0,1\}^{E'}\\\td_\alpha\le\tn_v\text{ if }v\in\alpha}}\prod_{\alpha\in E'}\tmu(\td_\alpha),
\end{align*}
where the function $e:\{0,1\}\to\QQ[x]$ defined by
\[e(n)=\begin{cases}1-x, & n=0, \\
  x, & n=1\end{cases}\]
is chosen such that
\begin{equation*}
  \sum_{k \in \{0,1\}} x^k F(k) = \sum_{n \in \{0,1\}} e(n) \sum_{0 \le s \le n} F(s)
\end{equation*}
for any function $F: \{0,1\} \to \ZZ$; this is applied above $\#V$
times. We have also used that $\tr_v\le\tn_v$ if $\td_\alpha \le
\tn_v$ for all $\alpha \in E'$ containing $v \in V$.

Note that since $\tmu(0)+\tmu(1)=0$ for a fixed $\tnn\in\{0,1\}^V$,
the sum over $\tdd$ will vanish if two vertices of $\{v\in V \mid
\tn_v=1\}$ can be joined by a line in $E'$. So it will not vanish
only if either all $\tn_v$ are $0$, exactly one of the nine $\tn_v$ is
equal to $1$ or exactly two $\tn_v$, $\tn_w$ are $1$ where $\{v,w\}$
is one of the nine edges $E$. So we have
\[A(x)=(1-x)^9+9(1-x)^8x+9(1-x)^7x^2=(1-x)^7\cdot(1+7x+x^2)\]
and finally
\[\omega=\prod_p(1-\|p\|_\infty^{-1})^7(1+7\|p\|_\infty^{-1}+\|p\|_\infty^{-2}).\]
Up to the proof of Lemma~\ref{lem:summations}, this completes the
proof of our main theorem.

\section{Estimations of lattice points}\label{sec:lattice}

In this section, we prove Lemma~\ref{lem:summations}.  We proceed as in
\cite{MR2000f:11080}. First, we rewrite the sum as
\begin{equation*}
  \sum_{\substack{\yy\in(\Odnz)^V\cap M(B)\\r_v|y_v\,\forall v\in V}}1
  =\sum_{\substack{\zz\in(\Odnz)^V\\\|z_j\|_\infty\le C_j}}1,
\end{equation*}
where we define
\begin{equation*}
  \zeta_j=z_{j,k}z_{j,l}z_{k,j}^2z_{l,j}^2,\quad C_j=B^{1/3}\|\zeta_jr_j^3r_{j,k}r_{j,l}r_{k,j}^2r_{l,j}^2\|_\infty^{-1/3}
\end{equation*}
for any $\{j,k,l\}=\{1,2,3\}$.

From here, unless stated otherwise, we use the convention that whenever $j,k$
appears in a statement, we mean all $j, k \in \{1,2,3\}$ with $j \ne k$, and
whenever $j$ shows up, we mean all $j \in \{1,2,3\}$.

To sum over $z_j$ for $j=1,2,3$, one can use the following estimate on
the number of integers in $\Odnz$ in the circle
$\Ci(C)=\{z\in\CC\mid\|x\|_\infty\le C\}$ of radius $\sqrt C$ in the complex
plane.  It seems interesting to note that
\cite[\S 4]{MR2000f:11080} finds it convenient to use the similar
estimate $C+O(\sqrt{C})$ instead of $C+O(1)$ (which is not available
in our case) for the number of natural numbers smaller than $C$.

\begin{lemma}\label{lem:circ}
  For any positive $C\in\RR$, we have
  \begin{equation*}
    \#(\{x\in\Odnz\}\cap \Ci(C))=\frac{2\pi}{\dK} C + O(\sqrt C).
  \end{equation*}
\end{lemma}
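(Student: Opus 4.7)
The plan is to identify $\Od$ with a lattice in $\CC \cong \RR^2$ of covolume $\dK/2$ and then apply the classical lattice-point counting argument for a convex region with piecewise smooth boundary.

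First, I would unwind the definitions. At the archimedian place of an imaginary quadratic field we normalized $\|x\|_\infty = |\sigma(x)|^2$, so the condition $\|x\|_\infty \le C$ becomes $|x|^2 \le C$, i.e., $\Ci(C)$ is literally the closed disk $D_{\sqrt C}\subset\CC$ of radius $\sqrt C$ centered at $0$, which has area $\pi C$. Second, I would check the covolume. For $K=\QQ(\sqrt n)$ imaginary quadratic with class number $1$, a $\ZZ$-basis of $\Od$ is $\{1,\tau\}$ where $\tau=\sqrt n$ if $n\equiv 2,3\pmod 4$ and $\tau=(1+\sqrt n)/2$ if $n\equiv 1\pmod 4$; a direct computation shows that the area of the fundamental parallelogram equals $|{\rm Im}(\tau)|=\dK/2$ (consistent with the values in Table~\ref{tab:numberfields}, e.g.\ $\dK=2$ for $\ZZ[i]$ and $\dK=\sqrt 3$ for $\ZZ[\omega]$).

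Next I would apply the standard elementary Gauss-type estimate: for a lattice $\Lambda\subset\RR^2$ of covolume $V$ and a bounded region $\Omega$ whose boundary has finite length $L$,
\begin{equation*}
\#(\Lambda\cap\Omega)=\frac{{\rm Area}(\Omega)}{V}+O\!\left(\frac{L}{V}+1\right),
\end{equation*}
the implicit constant depending only on the diameter of a fundamental parallelogram. (One obtains this by placing a translate of the fundamental parallelogram at each lattice point in $\Omega$ and bounding the symmetric difference by the set of parallelograms that meet $\partial\Omega$.) Applied to $\Omega=D_{\sqrt C}$ and $\Lambda=\Od$ this yields
\begin{equation*}
\#(\Od\cap\Ci(C))=\frac{\pi C}{\dK/2}+O(\sqrt C)=\frac{2\pi}{\dK}C+O(\sqrt C),
\end{equation*}
the error coming from the perimeter $2\pi\sqrt C$ (the dependence on $\dK$ is absorbed in the implicit constant, since $K$ is fixed throughout). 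Finally, removing the single point $x=0$ only alters the count by $1$, which is negligible compared with $O(\sqrt C)$, giving the stated bound for $\Odnz$.

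There is essentially no hard step here; the only thing to be careful about is the normalization of the archimedian norm (the squared complex absolute value, as fixed in Section~\ref{sec:peyre}) and the computation of the covolume of $\Od$ in $\CC$, both of which are bookkeeping. The observation worth highlighting, as the authors do, is that unlike the rational case where one can count integers in an interval with error $O(1)$, here one cannot do better than the $O(\sqrt C)$ boundary contribution, which later propagates into the shape of the error term in Lemma~\ref{lem:summations}.
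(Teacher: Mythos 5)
Your approach is the same as the paper's (count lattice points of $\Od$, a lattice of covolume $\dK/2$ in $\CC$, inside the disk of radius $\sqrt C$), and the identification of the normalization $\|x\|_\infty=|x|^2$ and the covolume are correct. However, there is a genuine gap in how you handle small $C$, and the lemma is asserted for \emph{all} positive $C$, which the paper is careful about and you are not.

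The Gauss-type estimate you invoke is $\#(\Lambda\cap\Omega)=\mathrm{Area}(\Omega)/V+O(L/V+1)$, with an unavoidable ``$+1$'' in the error (a disk of tiny radius still contains the origin). When $C$ is small this gives an error $O(\sqrt C+1)$, which is \emph{not} $O(\sqrt C)$: for $C\to 0$ the right-hand side stays bounded below by a positive constant while $\sqrt C\to 0$. Your subsequent claim that ``removing the single point $x=0$ only alters the count by $1$, which is negligible compared with $O(\sqrt C)$'' is false in exactly the same regime: $1$ is not negligible compared with $\sqrt C$ when $C<1$. The lemma is nevertheless true for small $C$, but for a different reason than the one you give: if $C<1$ then $\Ci(C)\cap\Odnz=\emptyset$, so the left-hand side is exactly $0$ and the discrepancy is $(2\pi/\dK)C\le(2\pi/\dK)\sqrt C=O(\sqrt C)$. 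The paper's proof treats this explicitly: it handles $C\ge C_0$ via the lattice-point theorem for homogeneously expanding domains, $C<1$ by the emptiness observation just described, and the intermediate range $1\le C\le C_0$ by absorbing everything into a constant (using $\sqrt C\ge 1$ there). Your proof would be correct if you added the case distinction, or if you restricted to $C\ge 1$ and then supplied the $C<1$ argument separately; as written it does not establish the statement for all positive $C$.
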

\begin{proof}
  The theorem follows in the case $C\ge C_0$ for some $C_0>0$ by the
  theorem on lattice points in homogenously expanding sets since the
  area of the circle is $\pi C$ and the area of a fundamental domain
  is $\dK/2$. The missing point in the origin can be accounted for in
  the error term in this case. If $C<1$, then
  $\Ci(C)\cap\Odnz=\emptyset$ and we have $(2\pi/\dK)C=O(\sqrt{C})$,
  so the lemma is also true in this case. Finally, if $1\le C\le C_0$,
  then $\Ci(C)\subseteq \Ci(C_0)$ and $\sqrt{C}\ge 1$. Therefore, we
  can choose the implied constant in the error term to be greater than
  $\#(\Ci(C_0)\cap\Odnz)+(2\pi/\dK)C_0$, which establishes the lemma
  in the remaining case.
\end{proof}

Therefore, with
$B_j=B\|r_{j,k}r_{j,l}r_{k,j}^2r_{l,j}^2\|_\infty^{-1}$ for all
$\{j,k,l\}=\{1,2,3\}$,
\begin{equation*}
 \begin{split}
   \sum_{\substack{\zz\in(\Odnz)^V\\\|z_j\|_\infty\le C_j}} 1
   &{}=\sum_{\substack{z_{j,k}\in\Odnz\\\|\zeta_j\|_\infty\le B_j}}
   \prod_{j=1}^3\left(\frac{2\pi}{\dK} C_j
     +O(\sqrt{C_j})\right)\\
   &{}=\sum_{\substack{z_{j,k}\in\Odnz\\\|\zeta_j\|_\infty\le B_j}}
   \left\{\frac{2^3\pi^3}{\dK^3}C_1C_2C_3 +
     O\left(C_1C_2C_3\max_j\left(C_j^{-1/2}\right)\right)\right\}\\
   &{}=\frac{2^3\pi^3B}{\dK^3R_1}\M(B,\rr)+O\left(\frac{B^{5/6}}{R_2}\R(B,\rr)\right),
  \end{split}
\end{equation*}
where
\begin{equation*}
  \M(B,\rr)=\sum_{\substack{z_{j,k}\in\Odnz\\\|\zeta_j\|_\infty\le B_j}}
  \prod_{j,k}\|z_{j,k}\|_{\infty}^{-1}
\end{equation*}
and
\begin{equation*}
\R(B,\rr)=\sum_{\substack{z_{j,k}\in\Odnz\\\|\zeta_j\|_\infty\le B_j}}
  \prod_{j,k}\|z_{j,k}\|_{\infty}^{-1}\max_j\|\zeta_j\|_\infty^{\frac 1 6}.
\end{equation*}

Let us begin with the estimation of the error term $\R$. Because of
symmetry it will be no loss to assume that
$\|\zeta_1\|_\infty\ge\|\zeta_2\|_\infty,\|\zeta_3\|_\infty$. Then
\begin{align*}
  \R(B,\rr)&{}=\sum_{\substack{z_{j,k}\in\Odnz\\\|\zeta_j\|_\infty\le B_j}}\|z_{2,1}z_{3,1}\|_\infty^{-2/3}\|z_{2,3}z_{3,2}\|_\infty^{-1}\|z_{1,2}z_{1,3}\|_\infty^{-5/6}\\
  &{}\ll\sum_{\substack{z_{j,k}\in\Odnz\,\forall j\neq1\\\|z_{j,k}\|_\infty\le B\,\forall
      j\neq1}}\|z_{2,1}z_{3,1}\|_\infty^{-2/3}\|z_{2,3}z_{3,2}\|_\infty^{-1}
  \sum_{\substack{u \in \Odnz\\\|u\|_\infty\le
      U}}\frac{d(u)}{\|u\|_\infty^{5/6}}
\end{align*}
where $U$ is defined as $U=B\|z_{2,1}z_{3,1}\|_\infty^{-2}$ and $d$ is
the divisor function in $\Odnz$.

Here, we need the following auxiliary result, which we will use again
later.

\begin{lemma}\label{lem:aux}
  For all sufficiently large $B$, we have
  \begin{equation*}
    \sum_{\substack{x \in \Odnz\\\|x\|_\infty \le B}} \|x\|_\infty^\alpha =
      \begin{cases}
        O(B^{\alpha+1}), & -1 < \alpha\le 0,\\
      O(\log B), & \alpha = -1.
      \end{cases}
  \end{equation*}
\end{lemma}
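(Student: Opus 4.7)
The plan is to exploit Lemma~\ref{lem:circ} via a dyadic decomposition of the range of $\|x\|_\infty$. A preliminary observation is that every $x \in \Odnz$ satisfies $\|x\|_\infty = |N_{K/\QQ}(x)| \ge 1$, since the norm of a nonzero algebraic integer is a nonzero rational integer; hence the sum is effectively over $1 \le \|x\|_\infty \le B$, and we may assume $B \ge 1$ (the bound is trivial otherwise). The case $\alpha = 0$ then reduces immediately to Lemma~\ref{lem:circ}, which yields $\#\{x \in \Odnz : \|x\|_\infty \le B\} = O(B) = O(B^{\alpha+1})$.

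For $\alpha \in (-1, 0)$ or $\alpha = -1$, I would partition $[1,B]$ into dyadic blocks $I_k = [2^k, 2^{k+1})$ for $k = 0, 1, \ldots, \lfloor \log_2 B \rfloor$. By Lemma~\ref{lem:circ}, the number of $x \in \Odnz$ with $\|x\|_\infty \in I_k$ is at most $\#(\Odnz \cap \Ci(2^{k+1})) = O(2^k)$, while $\|x\|_\infty^\alpha \le 2^{k\alpha}$ on the block (as $\alpha \le 0$). Summing block by block gives
\begin{equation*}
  \sum_{\substack{x \in \Odnz\\\|x\|_\infty \le B}} \|x\|_\infty^\alpha \ll \sum_{k=0}^{\lfloor \log_2 B \rfloor} 2^{k(1+\alpha)}.
\end{equation*}
For $-1 < \alpha \le 0$ the right-hand side is a geometric series with ratio $2^{1+\alpha} > 1$, dominated by its largest term $2^{\lfloor \log_2 B \rfloor(1+\alpha)} = O(B^{1+\alpha})$; for $\alpha = -1$ each term is $1$ and the sum is $O(\log B)$. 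No real obstacle is expected, since this is a routine application of the counting estimate from Lemma~\ref{lem:circ}; the only mild subtlety is ensuring that the boundary behavior at $t = 1$ (the smallest possible norm) is handled, which is why one starts the dyadic decomposition at $k = 0$ rather than letting it extend indefinitely downward.
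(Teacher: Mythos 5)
Your proof is correct, but it takes a different route from the paper. The paper sets $a_n=\#\{x\in\Odnz\mid\|x\|_\infty=n\}$ and applies the Abel summation formula,
\begin{equation*}
  \sum_{1\le n\le B}a_n n^\alpha = B^\alpha\sum_{1\le n\le B}a_n - \alpha\int_1^B x^{\alpha-1}\sum_{1\le n\le x}a_n\,\ddd x,
\end{equation*}
and then invokes Lemma~\ref{lem:circ} to estimate the partial sums $\sum_{n\le x}a_n$ that appear; the two cases $-1<\alpha\le 0$ and $\alpha=-1$ then drop out of the resulting integral $\int_1^B x^\alpha\,\ddd x$. You instead use a dyadic decomposition of the range $[1,B]$ of $\|x\|_\infty$, bound the count in each block $[2^k,2^{k+1})$ by Lemma~\ref{lem:circ}, and sum a geometric series. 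Both arguments are routine and sound, and both lean on Lemma~\ref{lem:circ} for the lattice-point count. The Abel summation approach is the more classical analytic-number-theory idiom and converts the problem into a single explicit integral, which would make it easy to extract sharper asymptotics (a leading constant and a secondary term) if one wanted them; your dyadic approach is somewhat more elementary, avoids the integral entirely, and is robust in the sense that it requires nothing beyond the $O(2^k)$ block count, though it only yields the order of magnitude. For the purpose of this lemma, where only the $O$-bound is needed, the two are equally effective.
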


\begin{proof}
  For any $n\in\NN$, let
  \begin{equation*}
    a_n=\#\{x\in\Odnz\mid\|x\|_\infty=n\}.
  \end{equation*}
  By the Abel summation formula, we have 
  \begin{equation*}
    \sum_{\substack{x \in \Odnz\\\|x\|_\infty \le B}} \|x\|_\infty^\alpha=\sum_{1 \le n \le B} a_nn^\alpha=B^\alpha\sum_{1\le n \le B} a_n-\alpha\int_1^Bx^{\alpha-1}\sum_{1 \le n \le x} a_n\ddd x.
  \end{equation*}
  We apply Lemma~\ref{lem:circ} to the sums over $n$. The first
  term is $O(B^{\alpha+1})$. For $-1<\alpha \le 0$, the second
  term is $O(B^{\alpha+1})$ as well; for $\alpha=-1$, it is $O(\log
  B)$.
\end{proof}

Using Lemma~\ref{lem:aux} twice, the inner sum can be estimated elementarily
as
\begin{align*}
  \sum_{\|u\|_\infty\le U}\frac{d(u)}{\|u\|_\infty^{5/6}}&=\sum_{\|u\|_\infty\le U}\sum_{v\mid u}\|u\|_\infty^{-5/6}=\sum_{\|v\|_\infty\le U}\|v\|_\infty^{-5/6}\sum_{w\le U\|v\|_\infty^{-1}}\|w\|_\infty^{-5/6}\\
  &=O\left(U^{1/6}\sum_{\|v\|_\infty\le B}\|v\|_\infty^{-1}\right)=O(U^{1/6}\log B).
\end{align*}
Inserting this into the original expression for $\R(B,\rr)$ and applying
Lemma~\ref{lem:aux} again gives
\begin{align*}
  \R(B,\rr)&{}\ll B^{1/6}\log B\sum_{\substack{z_{j,k}\in\Odnz\,\forall j \neq 1\\\|z_{j,k}\|_\infty\le B\,\forall j \neq 1}}\|z_{2,1}z_{3,1}z_{2,3}z_{3,2}\|_\infty^{-1}\\
  &{}\ll B^{1/6}\log B\Bigg(\sum_{\substack{z\in\Odnz\\\|z\|_\infty\le B}}
      \|z\|_\infty^{-1}\Bigg)^4\ll B^{1/6}(\log B)^5.
\end{align*}

Now it will be enough to show that
\begin{equation*}
\M(B,\rr)=\frac{2^4\pi^6}{6!\dK^6}(\log B)^6+O(R_3(\log B)^5)
\end{equation*}
where we define $R_3=\prod_{j,k}\|r_{j,k}\|_\infty^{1/3}$. Assume this
is done for the case of $r_{j,k}=1$ for all $j,k$ and all $B\ge B_0$
for some $B_0>1$. Then on the one hand
\[\M(B,\rr)\le\M(B,(1,\dotsc,1))
=\frac{2^4\pi^6}{6!\dK^6}(\log B)^6+O((\log B)^5)\] and on the other
hand
\[\M(B,\rr)\ge\M(B/R_3^6,(1,\dotsc,1))=\frac{2^4\pi^6}{6!\dK^6} 
(\log(B/R_3^6))^6+O((\log(B/R_3^6))^5)\] for all $\rr$ with
$R_3\le(B/B_0)^{1/6}$. This gives the required estimate in this case
since there is a constant $C$ such that $\log R_3 \le CR_3^{1/6}$ for
any $R_3\ge 1$. Otherwise we notice that the error term dominates the
main term.

It therefore remains to estimate $\M(B)=\M(B,(1,\dotsc,1))$. In this
case, $B_j=B$.

\begin{lemma}\label{lem:sum_integral}
  Let
  \begin{equation*}
    N(B)=\{\zz\in\CC^6\mid\|z_{j,k}\|_\infty\ge 1, \|\zeta_j\|_\infty\le B\},
  \end{equation*}
    where the $\zeta_j$ are defined as before. Define the integral
  \begin{equation*}
    \I(B)=\left(\frac 2 \dK\right)^6\int_{N(B)}\prod_{j,k}\frac{\ddd z_{j,k}}{\|z_{j,k}\|_\infty}.
  \end{equation*}
  Then $\M(B)=\I(B)+O((\log B)^5)$ for all sufficiently large
  $B$.
\end{lemma}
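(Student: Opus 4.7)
\textbf{Proof plan for Lemma~\ref{lem:sum_integral}.} The plan is to convert $\M(B)$ into $\I(B)$ one variable at a time. The basic tool is the one-variable identity
\begin{equation*}
\sum_{\substack{z\in\Odnz\\ \|z\|_\infty\le T}}\|z\|_\infty^{-1}
=\frac{2}{\dK}\int_{\{w\in\CC\,:\,1\le\|w\|_\infty\le T\}}\|w\|_\infty^{-1}\,\ddd w+O(1),
\end{equation*}
valid for every $T\in\RR$ with an absolute implied constant: both sides vanish when $T<1$, and when $T\ge 1$ Abel summation combined with Lemma~\ref{lem:circ} (exactly as in the proof of Lemma~\ref{lem:aux}) shows that both sides equal $(2\pi/\dK)\log T+O(1)$.

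Fix an ordering of the six variables $z_{j,k}$. At the $i$-th step I start from a mixed expression with $i-1$ variables already integrated and the other $7-i$ still summed, and I isolate one of the remaining sums, say over $z_{j,k}$, holding the other variables fixed. Since each $\zeta_j$ is a monomial, the conditions $\|\zeta_j\|_\infty\le B$ reduce to a single one-variable bound $\|z_{j,k}\|_\infty\le T_{j,k}$ (a minimum of at most two explicit quotients of the remaining variables). Applying the identity converts the sum into the corresponding integral plus an $O(1)$ error. After six such steps, Tonelli reassembles the iterated integrals into precisely $\I(B)$ over the region $N(B)$.

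The error introduced by the $i$-th step is bounded by a constant times the $5$-fold mixed sum/integral of $\prod\|z_{j,k}\|_\infty^{-1}$ over the surviving variables, subject to the still-active constraints (in particular $T_{j,k}\ge 1$). Since those constraints force each remaining $\|z_{j,k}\|_\infty$ to lie in $[1,B]$, Lemma~\ref{lem:aux} (for each surviving sum) and a direct polar-coordinate computation (for each surviving integral) show that every one-variable factor contributes $O(\log B)$; hence the step error is $O((\log B)^5)$, and summing over the six steps yields $\M(B)=\I(B)+O((\log B)^5)$. The main difficulty is purely bookkeeping: verifying at each step that the isolated constraint on $z_{j,k}$ really is of the form $\|z_{j,k}\|_\infty\le T_{j,k}$, and that the surviving constraints still suffice to keep each remaining $\|z_{j,k}\|_\infty$ bounded by $B$; no analytic input beyond Lemmas~\ref{lem:circ} and~\ref{lem:aux} is required.
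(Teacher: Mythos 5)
Your proposal is correct, and it takes a genuinely different route from the paper's own proof. The paper works globally: it fixes translates $F(z)$ and $F'(z)$ of a fundamental domain of $\Od$, shrinks $N(B)$ to a subset $E(B)$ staying away from the real and imaginary axes and from the unit ball, and then sandwiches $\M(B)$ between $\I(B)\pm O((\log B)^5)$ by comparing, termwise, each lattice summand with the integral over its associated copy of the fundamental domain, estimating the contribution of the "boundary slabs" $G(B)=N(B)\setminus E(B)$ separately. Your argument is instead iterative: you establish the clean one-variable identity
\begin{equation*}
\sum_{\substack{z\in\Odnz\\ \|z\|_\infty\le T}}\|z\|_\infty^{-1}
=\frac{2}{\dK}\int_{1\le\|w\|_\infty\le T}\frac{\ddd w}{\|w\|_\infty}+O(1)
\end{equation*}
(both sides being $(2\pi/\dK)\log T+O(1)$ for $T\ge 1$, and zero for $T<1$, so the implied constant is absolute) and then replace the six sums by integrals one at a time, noting at each step that the isolated constraint on the active variable is a single upper bound $\|z_{j,k}\|_\infty\le T_{j,k}$ (a minimum of a linear and a square-root quotient, since $z_{j,k}$ has exponent $1$ in $\zeta_j$ and $2$ in $\zeta_k$), and that the constraints $T_{j,k}\ge 1$ together with the surviving $\|\zeta_j\|_\infty\le B$ constraints keep each of the other five variables in $[1,B]$. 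Each step then costs $O((\log B)^5)$ by Lemma~\ref{lem:aux} for the remaining sums and the polar computation $\frac{2}{\dK}\int_{1\le\|w\|_\infty\le B}\|w\|_\infty^{-1}\ddd w=\frac{2\pi}{\dK}\log B$ for the remaining integrals, and Tonelli (nonnegative integrand) reassembles the six iterated one-variable integrals into $\I(B)$. Your approach is closer in spirit to the hyperbola-method comparison used over $\QQ$ and avoids the explicit geometry of the fundamental domain (rectangles, corners, axis slabs); the paper's approach is more symmetric in the six variables and transfers error control to a single geometric lemma rather than distributing it over six iterated steps. Both are sound; yours has more routine bookkeeping but arguably less delicate geometry.
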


\begin{proof}
  We fix a fundamental domain $F$ of the lattice corresponding to
  $\Od$ in $\CC$; its area is $\dK/2$. Our goal is to compare the
  terms $\prod_{j,k}\|z_{j,k}\|_\infty^{-1}$ of the sum defining
  $\M(B)$ with integrals over translations of $F$. For an upper bound
  for $\M(B)$ compared to $\I(B)$, we must choose a translation $F(z)$
  of $F$ whose elements are closer to $0$ than $z \in \Od$. For an
  upper bound for $\I(B)$ compared to $\M(B)$, we must choose a
  translation $F'(z)$ of $F$ whose elements are further away from $0$
  than $z \in \Od$. Furthermore, we must be careful to stay away from
  the ball $\|z\|_\infty \le 1$ and from the real and imaginary axes.

  Let $R$ be the smallest rectangle whose sides (of real length $l_r$
  resp.\ imaginary length $l_i$) are parallel to the real and
  imaginary axes and that contains $F$. For any $z \in \CC$ with real
  part $|\Re(z)| \ge 1+l_r$ (resp.\ $|\Re(z)| \ge 1$) and imaginary
  part $|\Im(z)| \ge 1+l_i$ (resp.\ $|\Im(z)| \ge 1$), let $R(z)$
  (resp.\ $R'(z)$) be the unique translation of the rectangle $R$ with
  the following property: The point $z \in \CC$ is the corner with the
  largest (resp.\ smallest) distance to $0 \in \CC$ of $R(z)$ (resp.\
  $R'(z)$). Let $F(z)$ (resp.\ $F'(z)$) be the unique translation of
  $F$ contained in $R(z)$ (resp.\ $R'(z)$). For any $x \in F(z)$
  (resp.\ $x \in F'(z)$), we have $\|z\|_\infty \ge \|x\|_\infty$
  (resp.\ $\|z\|_\infty \le \|x\|_\infty$).

  Let
  \begin{equation*}
    E(B) = \{\zz \in N(B) \mid 
    |\Re(z_{j,k})| \ge 1+l_r, |\Im(z_{j,k})| \ge 1+l_i\}
  \end{equation*}
  and $G(B) = N(B) \setminus E(B)$. Let
  \begin{equation*}
    G'(B) = \{\zz \in \CC^6 \mid 
    1 \le \|z_{j,k}\|_\infty \le B, |\Re(z_{1,2})| \le 1+l_r\}.
  \end{equation*}
  We note that $G(B)$ is contained in the union of $G'(B)$ with eleven
  other sets of a similar shape (with the analogous condition on
  $\Re(z_{j,k})$ or $\Im(z_{j,k})$) that we will be able to deal with
  in the same way as $G'(B)$.

  First, we give an upper bound for $\M(B)$ in terms of $\I(B)$.  We
  split $\M(B)$ into a sum over $E(B) \cap (\Odnz)^6$ giving the main
  term and a sum over $G(B) \cap (\Odnz)^6$ giving the error term. For
  the main term, we note that the sets $\prod_{j,k}F(z_{j,k})$ for all
  $\zz \in E(B) \cap (\Odnz)^6$ are subsets of $N(B)$ whose pairwise
  intersections are null sets. As $\|z_{j,k}\|_\infty \ge
  \|x\|_\infty$ for any $x \in F(z_{j,k})$ with $\zz \in E(B) \cap
  (\Odnz)^6$, we have $\|z_{j,k}\|_\infty^{-1} \le
  \frac{2}{\dK}\int_{F(z_{j,k})} \|x\|_\infty^{-1}\ddd x$. Therefore,
  \begin{equation*}
    \sum_{\zz \in E(B) \cap (\Odnz)^6} \prod_{j,k} \|z_{j,k}\|_\infty^{-1} 
    \le \sum_{\zz \in E(B) \cap (\Odnz)^6} \prod_{j,k} \frac{2}{\dK} 
    \int_{F(z_{j,k})} \frac{\ddd x_{j,k}}{\|x_{j,k}\|} 
    \le \I(B).
  \end{equation*}
  For the error term, we deal with $G'(B)$ instead of $G(B)$, as
  mentioned before; here,
  \begin{equation*}
    \sum_{\zz \in G'(B) \cap (\Odnz)^6} \prod_{j,k} \|z_{j,k}\|_\infty^{-1} \ll (\log B)^5\sum_{\substack{z_{1,2} \in \Odnz\\|\Re(z)| \le 1+l_r}} \|z_{1,2}\|_\infty^{-1} \ll (\log B)^5.
  \end{equation*}
  Indeed, in the first step, we use Lemma~\ref{lem:aux}. In the second
  step, let $N$ be the maximum number of lattice points in $\Od$ in a
  box of real length $1+l_r$ and imaginary length $1$. We note that
  the sum over $z_{1,2}$ is bounded because all $z_{1,2} \in \Odnz$
  with $|\Re(z_{1,2})| \le 1+l_r$ and $|\Im(z_{1,2})| \le 1$
  contribute $\le 4N$, and all $z_{1,2} \in \Od$ with $k \le
  |\Im(z_{1,2})| \le k+1$ contribute $\le 4N k^{-2}$ (because
  $\|z_{1,2}\|_\infty \ge k^2$), which converges when summed over $k
  \in \NN$. In total,
  \begin{equation*}
    \M(B) \le \I(B) + O((\log B)^5).
  \end{equation*}

  For the other direction, we note that, for any $x$ with $|\Re(x)|
  \ge 1+l_r$ and $|\Im(x)| \ge 1+l_i$, there is a $z \in \Od$ such
  that $x \in F'(z)$, with $|\Re(z)| \ge 1$ and $|\Im(z)| \ge 1$ and
  $\|z\|_\infty \le \|x\|_\infty$, by our construction of
  $F'(z)$. Therefore, for any $\xx \in E(B)$, there is a $\zz \in
  N(B) \cap (\Odnz)^6$ such that $\xx \in
  \prod_{j,k}F'(z_{j,k})$. Thus $E(B)$ is covered by $\bigcup_{\zz \in
    N(B) \cap (\Odnz)^6} \prod_{j,k} F'(z_{j,k})$, and
  \begin{equation*}
    \left(\frac 2\dK\right)^6\int_{E(B)} \prod_{j,k} \frac{\ddd x_{j,k}}{\|x_{j,k}\|_\infty} 
    \le \sum_{\zz \in
      N(B) \cap (\Odnz)^6} \prod_{j,k} \frac{2}{\dK}\int_{F(z_{j,k})} \frac{\ddd x_{j,k}}{\|x_{j,k}\|_\infty} \le \M(B).
  \end{equation*}
  It remains to consider the integral over $G(B)$. Again, we just
  consider $G'(B)$. Here, we have
  \begin{align*}
    \int_{G'(B)} \prod_{j,k} \frac{\ddd x_{j,k}}{\|x_{j,k}\|_\infty} &\ll (\log B)^5 \int_{1\le\|x_{1,2}\|_\infty\le B,\ |\Re(x_{1,2})|\le 1+l_r}\frac{\ddd x_{1,2}}{\|x_{1,2}\|_\infty}\\
    &\ll (\log B)^5\left(1 + 2(1+l_r) \int_1^\infty \frac{\ddd x}{x^2}\right)
      \ll (\log B)^5
  \end{align*}
  because of $\int_{1 \le \|x_{j,k}\|_\infty \le B}
  \|x_{j,k}\|_\infty^{-1} \ddd x_{j,k} \ll \log B$ and
  $\|x_{1,2}\|_\infty \ge |\Im(x_{1,2})|^2$ and the boundedness of the
  integral over all $x_{1,2}$ as above with $|\Im(x_{1,2})| \le 1$.
  Therefore,
  \begin{equation*}
    \I(B) \le \M(B)+O((\log B)^5),
  \end{equation*}
  completing the proof.
\end{proof}

It remains to evaluate $\I(B)$. Using the rotation symmetries of its
integrands, one can write
\begin{align*}
  \I(B)=\left(\frac 2 \dK\right)^6 \pi^6\int\frac{\ddd
    z_{1,2}}{z_{1,2}}\dotsm\frac{\ddd z_{3,2}}{z_{3,2}},
\end{align*}
where the integral runs now over all real $z_{j,k}\ge 1$ satisfying the three
inequalities $z_{j,k}z_{j,l}z_{k,j}^2z_{l,j}^2\le B$ for
$\{j,k,l\}=\{1,2,3\}$. Substituting $z_{j,k}=B^{t_{j,k}}$ shows that
\begin{equation*}
  \I(B)=V\frac{2^6\pi^6}{\dK^6}(\log B)^6,
\end{equation*}
where $V$ denotes the integral
\[V=\int\ddd t_{1,2}\dotsm\ddd t_{3,2}\] over the six-dimensional convex
polytope defined by the six inequalities $t_{j,k}\ge 0$ and the three
inequalities
\begin{equation*}
  t_{j,k}+t_{j,l}+2t_{k,j}+2t_{l,j} \le 1
\end{equation*}
for all $\{j,k,l\}=\{1,2,3\}$. The volume of this polytope is
$V=(4\cdot 6!)^{-1}$ \cite{MR2000f:11080}. This completes the proof of
Lemma~\ref{lem:summations}.

\bibliographystyle{alpha}

\bibliography{manin_dp3_3a2_gauss}

\end{document}